\DeclareFontFamily{U}{mathc}{}
\DeclareFontShape{U}{mathc}{m}{it}%
{<->s*[1.03] mathc10}{}
\DeclareMathAlphabet{\mathscr}{U}{mathc}{m}{it}
\newsavebox{\toy}
\savebox{\toy}{\framebox[0.65em]{\rule{0cm}{1ex}}}
\newcommand{\QED}{\usebox{\toy}\end{demo}}
\theoremstyle{theorem}
\newtheorem{thm}{Theorem}[section]
\newtheorem*{thm*}{Theorem}
\newtheorem{prop}[thm]{Proposition}
\newtheorem{lem}[thm]{Lemma}
\newtheorem{rem}[thm]{Remark}
\newtheorem{conjecture}[thm]{Conjecture}
\newcommand{\dis}{\displaystyle}
\newcommand{\R}{{\mathbb{R}}}
\newcommand{{\rd}}{\R^d}
\newcommand{\e}{\varepsilon}
\newcommand{\dd}{\text{\rm d}}             
\newcommand{\bsm}{\begin{smallmatrix}}
\newcommand{\esm}{\end{smallmatrix}}
\newcommand{\MN}[1]{\textcolor{red}{#1}}
\newcommand{\subsubsubsection}{\@startsection{paragraph}{4}{\z@}%
  {1.0\Cvs \@plus.5\Cdp \@minus.2\Cdp}%
  {.1\Cvs \@plus.3\Cdp}%
  {\reset@font\sffamily\normalsize}
}
\begin{document}
\author{Makoto Nakashima}
\title{An upper bound for the lower tail of the mass of balls under the  Critical $2d$ stochastic heat flow}
\date{}
\maketitle

\begin{abstract}
We study the critical two-dimensional stochastic heat flow $\mathscr{Z}_t^{\vartheta}$, recently constructed as the scaling limit of directed polymers in a random environment and as the weak limit of the solution to a mollified stochastic heat equation. Focusing on the mass of balls $\mathscr{Z}_t^{\vartheta}(B_r(0),B_r(a))$
 ($a\in \R^2$, $r>0$), we establish an upper bound on its lower tail. As a consequence, we prove the integrability of the logarithm of $\mathscr{Z}_t^{\vartheta}(B_r(0),B_r(a))$
 and its strict positivity. These results provide partial answers to open questions concerning the local behavior of $\mathscr{Z}_t^\vartheta$.\end{abstract}

\section{Introduction and main results}

The Kardar-Parisi-Zhang (KPZ) equation is a stochastic partial differential equation that descirbes the evolution of randomly growing interfaces:\begin{align}
\partial_t h=\nu \Delta h+\frac{\lambda}{2}\left|\nabla h\right|^2 +\dot{\mathcal{W}}\label{eq:KPZeq}\tag{KPZ},
\end{align}
where $\dot{\mathcal{W}}$ denotes a space-time white noise on $[0,\infty)\times\mathbb{R}^d$ \cite{KPZ86}. The equation is ill-posed due to the term $|\nabla h|^2$, which involves the square of a distribution. An approach to overcome this difficulty is  the Cole-Hopf transform, as introduced in \cite{BG97}.

In \cite{KPZ86}, the formal transformation
\begin{align*}
u(t,x)=\exp\left(\frac{\lambda}{2\nu}h(t,x)\right)
\end{align*}
was introduced, along with  the associated  stochastic heat equation with multiplicative noise\begin{align}
\partial_t u=\nu \Delta u+\frac{\lambda}{2\nu}u\dot{\mathcal{W}}.\label{eq:SHE}\tag{SHE}
\end{align} 

In \cite{BG97}, Bertini and Giacomin provided a rigorous mathematical treatment of this transformation in one spatial dimension, particularly in the context of the weakly asymmetric simple exclusion process and the solid-on-solid (SOS) process. They first considered the mollified stochastic heat equation: \begin{align}
\partial_t u^\e=\frac{1}{2} \Delta u^\e-\lambda u^\e\dot{\mathcal{W}}^\e,\label{eq:SHEe}\tag{$\text{SHE}_\e$}
\end{align}
with initial condition $u_0(x)>0$, where the mollified noise is defined by $\dot{\mathcal{W}}^\e(t,x):=\int_{\R}j_\e(x-y)\dot{\mathcal{W}}(t,y)\dd y$  for a symmetric probability density $j\in C_c^\infty(\R)$ with $j(x)=j(-x)$ and $j_{\e}(x)=\e^{-1}j\left(\frac{x}{\e}\right)$. Then, Bertini and Giacomin showed that $h^\e(t,x):=\log u^\e(t,x)$ satisfied a mollified version of the KPZ equation \begin{align}
\partial_t h^\e=\frac{1}{2}\Delta h^\e+\frac{\lambda}{2}\left(|\nabla h^\e|^2-J_\e(0)\right)+\dot{\mathcal{W}}\label{eq:KPZe}\tag{KPZ$_\e$},
\end{align}
where $J_\e(y)=\frac{1}{\e}\int j(x)j(y-x)\dd x$. 
Furthermore, they proved that $u^\e$ converged  almost surely and in $L^p$ to  $u$, the solution of \eqref{eq:SHE}, uniformly on compacts sets of $[0,\infty)\times \R$. 

It is also known \cite{Mue91,Mor14}  that if $u_0$ is nonnegative, not identically zero, and continuous, then $u(t,x)$ is strictly positive on $\R$ for all $t>0$.  Consequently, $h^\e$ converges almost surely to the process $\mathfrak{h}(t,x):=\log u(t,x)$,  known as the \textit{Cole-Hopf solution} of the KPZ equation \eqref{eq:KPZeq}. 
\begin{align}
u^\varepsilon\Rightarrow u\overset{\text{C-H trans.}}{\leadsto} h^\varepsilon\Rightarrow \mathfrak{h}.\label{eq:Cole-Hopf} \tag{Cole-Hopf}
\end{align} 
 
 In recent years, the KPZ equation has received considerable attention in the study of singular stochastic partial differential equations (SPDEs). Notably, Hairer developed the theory of \textit{regularity structures} and established the existence of a distributional solution to the KPZ equation \cite{Hai13,Hai14,FH14}. Alternative approaches include the \textit{paracontrolled calculus} by  Gubinelli-Imkeller-Perkowski \cite{GIP15}, the \textit{energy solutions} by Gon\c{c}alves-Jara \cite{GJ14}, and  the \textit{renormalization group approach} by Kupiainen\cite{Kup16}. 

\vspace{1em}
Considerable efforts have been devoted to construct solutions to \eqref{eq:KPZeq} for $d\geq 2$.
In the literature of singular SPDEs, the cases $d=1$ , $d=2$, and $d\geq 3$ are referred to as \textit{sub-critical}, \textit{critical}, and \textit{super-critical} respectively \cite{Hai14}. In the terminology of physics,  $d=1$ and $d\geq 3$  correspond to the   \textit{ultraviolet superrenormalizable} and \textit{infrared renormalizable} cases, respectively \cite{MU18}.

We now review recent progress made in the two-dimensional case. Consider the stochastic heat equation with appropriately tuned noise: \begin{align}
\partial_t u^{\vartheta,\e}=\frac{1}{2}\Delta u^{\vartheta,\e}-\beta_{\vartheta,\e} u^{\vartheta,\e}\dot{\mathcal{W}}^\e,\quad u^{\vartheta,\e}(0,x)=u_0(x),\label{eq:SHEbetae}\tag{SHE$_{\vartheta,\e}$}
\end{align} 
where the mollified noise is defined by \begin{align*}
\dot{\mathcal{W}}^\e(t,x):=\int_{\R^2}j_\e(x-y)\dot{\mathcal{W}}(t,y)\dd y
\end{align*}
with a symmetric probability density $j\in C_c^\infty(\R^2)$ satisfying $j(x)=j(-x)$, and $j_{\e}(x)=\e^{-2}j\left(\frac{x}{\e}\right)$.

 The tuning parameter $\beta_{\vartheta,\e}$ is defined by \begin{align*}
\beta_{\vartheta,\e}=
\sqrt{\frac{2\pi}{-\log \e}+\frac{\rho+o(1)}{\left(-\log \e\right)^2}},
\end{align*}
where $\rho=\pi \vartheta+C$ for $\vartheta\in \R$ and constant $C$ given by \begin{align*}
-\pi \log 4-2\pi \int_{\R^2\times\R^2}J(x)\log \frac{1}{|x-y|}J(y)\dd x\dd y+\pi \gamma,
\end{align*}
where $\gamma=\lim_{n\to\infty}\left(\sum_{k=1}^n\frac{1}{k}-\log n\right)$ is the Euler-Mascheroni constant.

It is straightforward to see that for $u_0,\psi\in L^2(\R^2)$, \begin{align*}
E\left[\int_{\R^2}u^{\vartheta,\e}(t,x)\psi(x)\dd x\right]=\int_{\R^2\times \R^2}u_0(x)p_t(x,y)\psi(y)\dd x\dd y,
\end{align*}
where \begin{align*}
p_t(x)=\frac{1}{2\pi t}\exp\left(-\frac{|x|^2}{2t}\right)
\end{align*}
is the Gaussian density function with the mean $0$ and the variance $t$, and we set $p_t(x,y)=p_t(y-x)$. Therefore, the tightness of $\int_{\R^2}u^{\vartheta,\e}(t,x)\psi(x)\dd x$ follows. Moreover, it is known that 
for $u_0,\psi\in L^2(\R^2)$, the variance of $\int_{\R^2}u^{\vartheta,\e}(t,x)\psi(x)\dd x$ converges to a nontrivial quantity of the same form as in \cite{BC98}, suggesting the existence of a nontrivial  random field. Furthermore,  Caravenna-Sun-Zygouras \cite{CSZ19b} and Gu-Quastel-Tsai \cite{GQT21} proved that  the higher moments of $\int_{\R^2}u^{\vartheta,\e}(t,x)\psi(x)\dd x$ were finite and converged. However, these moments grow too rapidly to determine the distribution on their own, and the uniqueness of the limit was established in \cite{CSZ23}  in the directed polymers setting (in the sense of finite-dimensional time distributions). In addition, Tsai \cite{Tsa24} proved the convergence of $u^{\vartheta,\e}(t,x)$ as a continuous measure-valued process,  where the space of  measures is equipped with the vague topology. Therefore, the limit $\mathscr{Z}_t^{\vartheta}(u_0,\dd x)$ may be regarded as a solution to \eqref{eq:SHE} in dimension $d=2$. Recently, several studies have investigated the  properties of $\mathscr{Z}^\vartheta$ \cite{CM24,CSZ24,LZ24,Nak25,Che25,CSZ25,GT25,CD25,CT25,GN25,BCT25,GT26}.

In \cite{CSZ25}, it was proved that $\mathscr{Z}^\vartheta$ was singular with respect  to the Lebesgue measure on $(\R^2,\mathcal{B}(\R^2))$.
Moreover, they proved that for any fixed $t>0$ and $\vartheta\in\R$\begin{align*}
\lim_{\e\to0}\frac{1}{2\pi \e^2}\mathscr{Z}_t^\vartheta(1,B_\e(x))= 0, \text{ a.e.~$x\in \R^2$, }
\end{align*} 
$P$-almost surely, where we denote by $\mathscr{Z}^\vartheta_t(u_0,A)$ the volume of a Borel set $B\in\R^2$ with respect to the measure $\mathscr{Z}^\vartheta_t(u_0,\dd x)$ and  $B_r(x)=\{y\in\R^2:|y-x|<r\}$ is the open ball centered at $x$ with the radius $r>0$. 

Thus, the Cole-Hopf transform cannot be directly applied to  $\mathscr{Z}^\vartheta$. Therefore, we need to consider further renormalization:
If one can find functions $a(\e)$ and $b(\e)$ such that \begin{align*}
\int_{\R^2}\frac{1}{b(\e)}\left(\log \mathscr{Z}^\vartheta_t(1,B_\e(x))-a(\e)\right)\psi(x)\dd x,\quad \psi\in C_c(\R^2)
\end{align*}
converges to a nontrivial random variable $\mathfrak{h}_t(\psi)$, we may regard $\mathfrak{h}_t(\psi)$ as the solution to the two-dimensional KPZ equation \eqref{eq:KPZeq}. In the subcritical regime of $u^{\vartheta,\e}$, where $\beta_{\vartheta,\e}$ is replaced by $\rho\beta_{\vartheta,\e}$ for $\rho\in (0,1)$, a similar renormalization has been considered for $u^{\vartheta,\e}(t,x)$ with $a(\e)=E[\log u^{\vartheta,\e}(t,x)]$ and $b(\e)=\beta_{\vartheta,\e}$. 
Firstly, Chatterjee and Gu established the tightness of this renormalized field \cite{CD20}. Subsequently, it was shown by Gu \cite{Gu20} for small $\rho<1$, and later  by Caravenna-Sun-Zygouras \cite{CSZ20a} for all $\rho\in (0,1)$, that the resulting field converges to the Edwards-Wilkinson equation, which is Gaussian. For general initial data, it is further known that the limiting field solves the Edwards-Wilkinson equation with an additional  linear drift term \cite{NN23}.

Thus, a natural choice of $a(\e)$ in the critical case is $E\left[\log \mathscr{Z}^\vartheta_t(1,B_\e(x))\right]$, but the integrability of the logarithm of  $\mathscr{Z}^\vartheta_t(1,B_\e(x))$ had not been established so far. 

Our first main result settles this integrability issues.

\begin{rem}
Recently \cite{GT26}, it has been proved that there exists a deterministic constants $\alpha_\e\to 0$ such that \begin{align*}
\frac{1}{\sqrt{\log \log \e^{-1}}}\left(\log \mathscr{Z}_1^\vartheta(1,p_{\e^2}(0))+\frac{1+\alpha_\e}{2}\log\log \e^{-1}\right)\Rightarrow \text{standard Gaussian}, \text{ as $\e\to0.$}
\end{align*}
\end{rem}

\begin{thm}[Upper bound of the lower tail]\label{thm:localposi}
Fix $\alpha>2$. Then, for each $\vartheta\in \mathbb{R}$, $t>0$, $a\in \R^2$ and $r>0$, there exists a constant  $C_{\vartheta,\alpha,a,r,t}>0$ such that for all sufficiently large  $x>1$,
\begin{align*}
P\left(\log \mathscr{Z}^\vartheta_t(B_r(0),B_r(a))<-x\right)\leq  \exp\left(-C_{\vartheta,\alpha,a,r,t}{x^\frac{2}{\alpha+1}}\right) .
\end{align*} 
\end{thm}

As an immediate consequence, we obtain the integrability of the logarithm:
\begin{thm}[Integrability of logarithm]
For any $p>1$, $\log \mathscr{Z}_t^{\vartheta}(B_r(0),B_r(a))$ has a finite $p$-th moment for every $\vartheta\in \mathbb{R}$,  $t>0$, $a\in \R^2$ and $r>0$.
\end{thm}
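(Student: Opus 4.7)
The plan is to split $\log Z$, where $Z := \mathscr{Z}_t^\vartheta(B_r(0),B_r(a))$, into its positive and negative parts and bound each by a separate mechanism. Since
\begin{align*}
|\log Z|^p \le 2^{p-1}\bigl((\log Z)_+^p + (\log Z)_-^p\bigr),
\end{align*}
it suffices to establish $E[(\log Z)_+^p]<\infty$ and $E[(\log Z)_-^p]<\infty$ for every $p>1$.

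For the positive part I would rely on the positive-moment bounds for the critical $2d$ SHF from \cite{CSZ19b,GQT21,CSZ23}: because $\ind_{B_r(0)},\ind_{B_r(a)}\in L^2(\R^2)$, the random variable $Z$ satisfies $E[Z^n]<\infty$ for every integer $n\ge 1$, hence $E[Z^q]<\infty$ for every $q>0$ by Lyapunov. Combined with the elementary inequality $(\log x)_+^p\le C_{p,q}(1+x^q)$ valid for $x>0$, this gives
\begin{align*}
E[(\log Z)_+^p]\le C_{p,q}\bigl(1+E[Z^q]\bigr)<\infty.
\end{align*}

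For the negative part I would feed Theorem \ref{thm:localposi} into the layer-cake formula
\begin{align*}
E[(\log Z)_-^p]=p\int_0^\infty M^{p-1}\,P(\log Z<-M)\,\mathrm{d}M.
\end{align*}
Fix any $\alpha>2$ (for instance $\alpha=3$) and make the change of variable $M=3C_{\alpha,r,a}^2 N^{\alpha+1}$, so that $N(M)\asymp M^{1/(\alpha+1)}$. Theorem \ref{thm:localposi} then yields, for $M$ large,
\begin{align*}
P(\log Z<-M)\le \exp\!\left(-c_{\vartheta,t,\alpha,a,r}\,\frac{M^{2/(\alpha+1)}}{(\log M)^2}\right),
\end{align*}
which decays faster than any polynomial in $M$. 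Consequently the tail integral converges for every $p>1$, while the bounded range $M\in[0,M_0]$ is handled by the trivial bound $P(\cdot)\le 1$.

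Overall, the statement is essentially a direct corollary of Theorem \ref{thm:localposi} together with the already established positive-moment bounds for $\mathscr{Z}^\vartheta$, so I do not expect a substantial obstacle. The only minor points to verify are (i) that the known moment bounds from \cite{CSZ19b,GQT21} apply to the pairing against $\ind_{B_r(0)}$ and $\ind_{B_r(a)}$, which is immediate since both indicators lie in $L^2(\R^2)$, and (ii) the straightforward change of variables $M\leftrightarrow N^{\alpha+1}$ and identification of the threshold beyond which the bound in Theorem \ref{thm:localposi} is applicable.
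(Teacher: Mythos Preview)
Your proposal is correct and is exactly the argument the paper has in mind: the theorem is stated in the paper only as a corollary of Theorem~\ref{thm:localposi} with no separate proof, and your layer-cake computation for $(\log Z)_-$ together with the finiteness of all positive moments of $Z$ (for $(\log Z)_+$) is the standard way to unpack that corollary. The only detail worth noting is that Theorem~\ref{thm:localposi} is effectively stated for integer $N$ large, so after your substitution $M=3C_{\alpha,r,a}^2 N^{\alpha+1}$ you should interpolate by monotonicity of the tail between consecutive integers; this costs only a harmless constant and does not affect the conclusion.
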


We also address an open question posed in \cite[Section 11, Open Question 2]{CSZ24}, concerning  the strict local positivity of $ \mathscr{Z}^\vartheta (1,B_\e(x))$: Let  $u_0\in C_{c,+}(\R^2)$ not be a constant function $0$, where $C_{c,+}(\R^2)$ denotes the set of nonnegative continuous functions with compact support. It had remained open  whether  the following holds:\begin{align*}
\mathscr{Z}^\vartheta_t(u_0,B_r(x))>0 \text{ for any $t>0$, $r>0$ and $x\in \R^2$, $P$-almost surely,}
\end{align*}

Theorem \ref{thm:localposi} yeilds a partial affirmative answer:
\begin{thm}[Strict local positivity]\label{thm:localposi2}
Let  $u_0\in C_{c,+}(\R^2)$ be such that  $u_0(x_0)>0$ for some $x_0\in\R^2$. Then,  for each fixed $t>0$,
\begin{align*}
\mathscr{Z}^\vartheta_t(u_0,B_r(x))>0 
\end{align*} 
\text{ for $r>0$ and $x\in \R^2$, $P$-almost surely.}
\end{thm}

\begin{rem}
Recently, Clark and Tsai \cite{CT25} independently proved strict local positivity of $\mathscr{Z}^\vartheta_t(u_0,B_r(x))$ via a different method.
\end{rem}

Therefore, we formulate the problem of the  two-dimensional KPZ equation \eqref{eq:KPZeq} as follows:
\begin{conjecture}
There exists a non-random function $b(\e)>0$ with $b(\e)\to \infty$ as $\e\to 0$ such that  for any $\psi\in C_c(\R^2)$\begin{align*}
\int_{\R^2}\frac{1}{b(\e)}\left(\log \mathscr{Z}^\vartheta_t(1,B_\e(x))-E[\log \mathscr{Z}^\vartheta_t(1,B_\e(x))]\right)\psi(x)\dd x
\end{align*}
converges weakly to a nontrivial random variable $\mathfrak{h}_t(\psi)$.
\end{conjecture}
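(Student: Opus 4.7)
Since the statement is explicitly posed as an open conjecture, this plan is necessarily speculative; the strategy below uses Theorems~\ref{thm:localposi} and~\ref{thm:strictlocalposi} as inputs and parallels the subcritical fluctuation theory of~\cite{CD20,CSZ20a,Gu20,NN23}.

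The first task is to identify the normalization $b(\e)$ by analysing the asymptotics of $\mathrm{Var}\bigl(\log \mathscr{Z}^\vartheta_t(1,B_\e(x))\bigr)$ as $\e\downarrow 0$. Theorem~\ref{thm:localposi}---together with the resulting $L^p$-integrability of $\log \mathscr{Z}^\vartheta_t(1,B_\e(x))$---makes this variance well-defined, and a natural candidate informed by the Gaussian multiplicative chaos analogy and by the subcritical Gaussian fluctuations is $b(\e)^2 \asymp \log\log(1/\e)$, though other slowly-varying rates (including the constant one) cannot be excluded a priori. A concrete entry point is a Clark--Ocone/Malliavin-type identity applied to $\log \mathscr{Z}^\vartheta_t(1,B_\e(x))$, expressing the variance as an $L^2$-norm of the Malliavin derivative of the logarithm with respect to the underlying noise; obtaining sharp two-sided uniform bounds on this derivative as $\e\to 0$ is the key quantitative step.

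Next I would prove asymptotic spatial decorrelation: setting $c_\e := \E[\log \mathscr{Z}^\vartheta_t(1,B_\e(0))]$, for $x,y\in\R^2$ at macroscopic distance the pair
\be{\left(\tfrac{1}{b(\e)}\bigl(\log \mathscr{Z}^\vartheta_t(1,B_\e(x))-c_\e\bigr),\ \tfrac{1}{b(\e)}\bigl(\log \mathscr{Z}^\vartheta_t(1,B_\e(y))-c_\e\bigr)\right)}
should become asymptotically independent. A natural route passes through the directed-polymer approximation, exploiting the near-independence of polymer paths confined to disjoint spatial cells together with the decoupling inequalities developed in~\cite{CSZ23,CSZ24}. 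Given such decorrelation, one partitions $\mathrm{supp}(\psi)$ into mesoscopic cells of side $\delta\gg \e$, writes $\int\psi(x)\log \mathscr{Z}^\vartheta_t(1,B_\e(x))\dd x$ as a Riemann sum of near-independent contributions, and applies a Lindeberg-type CLT to obtain weak convergence to a (presumably Gaussian) limit $\mathfrak{h}_t(\psi)$; tightness follows from Theorem~\ref{thm:localposi} combined with a matching upper-tail estimate and a standard $\delta\to 0$ diagonal argument.

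The principal obstacle is the sharp two-point control needed both for the variance asymptotics and for the decorrelation. The nonlinearity of the logarithm and the singularity of $\mathscr{Z}^\vartheta$ established in~\cite{CSZ25} make these delicate: small values of $\mathscr{Z}^\vartheta_t(1,B_\e(x))$ and $\mathscr{Z}^\vartheta_t(1,B_\e(y))$ may be strongly correlated even when $|x-y|$ is large, and only the lower-tail side is currently controlled. An upper-tail analogue of Theorem~\ref{thm:localposi}, say $P\bigl(\log \mathscr{Z}^\vartheta_t(B_r(0),B_r(a))>M\bigr)\leq e^{-\Phi(M)}$ for a suitable increasing $\Phi$, also appears necessary---both to pin down $b(\e)$ and to rule out the degenerate limit $\mathfrak{h}_t(\psi)\equiv 0$. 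It is precisely this missing two-sided tail information that prevents the conjecture from being upgraded to a theorem in the present work.
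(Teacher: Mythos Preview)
The paper does not prove this statement: it is explicitly labeled a \emph{Conjecture} and left open. After establishing Theorems~\ref{thm:localposi}--\ref{thm:strictlocalposi}, the author merely \emph{formulates} the conjecture as the natural problem for a two-dimensional KPZ equation, with no proof or even proof sketch offered. So there is no ``paper's own proof'' against which to compare your proposal.

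You correctly recognize this in your opening sentence. What you have written is a plausible research outline rather than a proof, and you are transparent about its speculative nature. A few remarks on the outline itself: the Malliavin/Clark--Ocone approach to the variance of $\log \mathscr{Z}^\vartheta_t(1,B_\e(x))$ is reasonable in spirit, but note that the critical SHF is not known to admit a Malliavin-differentiable representation in any useful sense (it is not a functional of white noise in the way the mollified solution is), so this step would already require substantial new machinery. Your identification of the missing upper-tail bound and the two-point decorrelation as the main obstacles is accurate and matches what the paper's results leave open. But to be clear: none of this constitutes a proof, and the paper makes no claim that one exists.
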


\section{Preliminaries}\label{sec:prel}
In this section, we recall some properties that will be used in the proof. For more details, we refer the reader to \cite{CSZ24,CSZ25}.

\begin{prop}\label{prop:prop1}
Let $u_0, \phi\in C_{c,+}(\R^2)$ and $\vartheta\in \R$. Then, we have \begin{align*}
\mathscr{Z}^\vartheta_t(u_0,\phi)\geq0 
\end{align*} 
for all $t\geq 0$ almost surely.

Moreover, we have \begin{align*}
E\left[\mathscr{Z}^\vartheta_t(u_0,\phi)\right]=\int_{\R^2\times \R^2}u_0(x)p_t(x,y)\phi(y)\dd x\dd y.
\end{align*}
\end{prop}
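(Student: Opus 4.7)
The plan is to transfer both assertions from the mollified approximation $u^{\vartheta,\e}$ to the limit $\mathscr{Z}^\vartheta_t$ by using the convergence of $u^{\vartheta,\e}$ to $\mathscr{Z}^\vartheta$ (in the sense of finite-dimensional distributions as continuous measure-valued processes, via \cite{CSZ23,Tsa24}). Testing against $\phi \in C_{c,+}(\R^2)$ gives
\begin{align*}
\langle u^{\vartheta,\e}(t,\cdot), \phi\rangle := \int_{\R^2} u^{\vartheta,\e}(t,y)\,\phi(y)\,\dd y \;\Longrightarrow\; \mathscr{Z}^\vartheta_t(u_0,\phi),
\end{align*}
so it suffices to prove the two properties at the mollified level and pass to the limit.

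For nonnegativity, observe that \eqref{eq:SHEbetae} is a linear SPDE with multiplicative noise and $u^{\vartheta,\e}(0,\cdot)=u_0\ge 0$; by the standard maximum/positivity principle for multiplicative stochastic heat equations (this is where one uses $u_0\in C_{c,+}$), one has $u^{\vartheta,\e}(t,y)\ge 0$ for all $t\ge 0, y\in\R^2$, almost surely. Hence $\langle u^{\vartheta,\e}(t,\cdot),\phi\rangle\ge 0$ for nonnegative $\phi$, and nonnegativity is preserved under weak convergence, giving $\mathscr{Z}^\vartheta_t(u_0,\phi)\ge 0$ a.s. The statement for all $t\ge 0$ simultaneously then follows from the continuity in $t$ of $\mathscr{Z}^\vartheta$ established in \cite{Tsa24}.

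For the first-moment identity, expanding the solution of \eqref{eq:SHEbetae} by It\^o's formula (or using the chaos expansion) yields, for each $\e>0$,
\begin{align*}
E\bigl[\langle u^{\vartheta,\e}(t,\cdot),\phi\rangle\bigr] = \int_{\R^2\times\R^2} u_0(x)\,p_t(x,y)\,\phi(y)\,\dd x\,\dd y,
\end{align*}
which is independent of $\e$. To commute the expectation with the limit $\e\to 0$, one needs uniform integrability of the family $\{\langle u^{\vartheta,\e}(t,\cdot),\phi\rangle\}_{\e>0}$; this is provided by the uniform $L^2$-boundedness of these functionals proved in \cite{BC98,CSZ19b} (and in fact by the uniform bound on all higher moments in \cite{CSZ19b,GQT21}). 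Passing to the limit then yields the stated identity. The only nontrivial point is this uniform integrability input, but for $u_0,\phi\in C_{c,+}(\R^2)\subset L^2(\R^2)$ the $L^2$-bound is all that is needed, so the proof reduces essentially to an application of the mollified first-moment formula together with the convergence of $u^{\vartheta,\e}$ to $\mathscr{Z}^\vartheta$.
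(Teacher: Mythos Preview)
The paper does not give its own proof of this proposition: it is stated in Section~\ref{sec:prel} as a known preliminary fact, with a blanket reference to \cite{CSZ24,CSZ25} for details. The first-moment identity at the mollified level is even displayed in the introduction, so the paper is treating both claims as background.

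Your argument is the standard justification and is correct: positivity of $u^{\vartheta,\e}$ (e.g.\ via the Feynman--Kac representation used later in the paper, which makes $u^{\vartheta,\e}(t,x)$ manifestly nonnegative) and the first-moment formula hold for every $\e>0$, and both survive the limit $\e\to 0$ thanks to the convergence in \cite{CSZ23,Tsa24} and the uniform $L^2$-bound from \cite{BC98} (which gives the needed uniform integrability). So your proposal supplies a valid proof of what the paper merely cites.
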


To describe the variance of $\mathscr{Z}^\vartheta_t(u_0,\phi)$, we consider the derivative of the Volterra function \cite{Ape10,CM24}\begin{align*}
G_\vartheta(t):=\int_0^\infty \frac{st^{s-1}e^{(\vartheta-\gamma) s}}{\Gamma(s+1)}\dd s
\end{align*}
for $t\geq 0$. Additionally, we set \begin{align*}
G_\vartheta(t,x):=G_\vartheta(t)p_{\frac{t}{2}}(x)
\end{align*}
for $t>0$, $x\in\R^2$.

Then, we have the following.

\begin{prop}\cite{BC98}\label{prop:second}
Let $u^{\vartheta,\e}$ be the solution of  \eqref{eq:SHEbetae} with initial condition  $u_0\in L^2(\R^2)$. 

Then, for $\phi\in L^2(\R^2)$, we have\begin{align}
&\lim_{\e\to 0}\mathrm{Var}\left( \int_{\R^2}u^{\vartheta,\e}(t,x)\phi(x)\dd x\right)\\
&=4\pi\iint_{\R^2\times \R^2}\dd x\dd y\iint_{0<u<v<t}\dd u\dd v \left(P_uu_0(x)\right)^2 G_{\vartheta}(v-u,y-x)\left(P_{t-v}\phi(y)\right)^2\notag\\
&=\mathrm{Var}\left(\mathscr{Z}_t^\vartheta(u_0,\phi)\right),\label{eq:varint}
\end{align}
where the semigroup is defined by $P_t\phi(x)=\int_{\R^2}\phi(y)p_t(x,y)\dd y $.

The same identity  holds when $u_0\in C_b(\R^2)$ and $\phi\in C_c(\R^2).$

\end{prop}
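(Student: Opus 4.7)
The approach is the Wiener chaos expansion of $u^{\vartheta,\e}$, which is available because the SPDE \eqref{eq:SHEbetae} is linear in the noise. Iterating its mild formulation yields
\begin{align*}
u^{\vartheta,\e}(t,x) = \sum_{n\geq 0} I_n^\e(t,x),
\end{align*}
where $I_n^\e(t,x)$ is an $n$-fold stochastic integral against the mollified noise, with deterministic kernel built from $(P_{s_1}u_0)(y_1)\prod_{i=1}^{n-1} p_{s_{i+1}-s_i}(y_{i+1}-y_i)\cdot p_{t-s_n}(x-y_n)$ over the simplex $0<s_1<\ldots<s_n<t$ together with the prefactor $\beta_{\vartheta,\e}^n$. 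Pairing with $\phi$ and setting $J_n^\e:=\int_{\R^2} I_n^\e(t,x)\phi(x)\dd x$, orthogonality of Wiener chaoses gives
\begin{align*}
\mathrm{Var}\left(\int_{\R^2}u^{\vartheta,\e}(t,x)\phi(x)\dd x\right) = \sum_{n\geq 1}\|J_n^\e\|_{L^2(\P)}^2,
\end{align*}
and It\^o's isometry expresses each summand as an explicit iterated integral of squared mollified heat kernels, with the squared envelopes $(P_{s_1}u_0)(y_1)^2$ and $(P_{t-s_n}\phi)(y_n)^2$ at the two endpoints.

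The core step is the limit $\e\to 0$. Relabel the outermost times and positions as $u:=s_1$, $v:=s_n$, $x:=y_1$, $y:=y_n$. The outer propagators combine via Chapman--Kolmogorov into $p_{2(v-u)}(y-x)$, whereas the $n-2$ intermediate variables are integrated out using the pointwise estimate $\int p_{s}^\e(z)p_{s}^\e(z+w)\dd z \to p_{2s}(w)$ and the diagonal identity $p_{2\tau}(0)=\frac{1}{4\pi\tau}$. Thus the "bulk" of the chain contributes
\begin{align*}
\left(\frac{\beta_{\vartheta,\e}^2}{4\pi}\right)^{n-1}\int_{u<s_2<\ldots<s_{n-1}<v}\prod_{i=1}^{n-2}\frac{1}{s_{i+1}-s_i}\,\dd s_2\cdots \dd s_{n-1},
\end{align*}
and the sum over $n$, with the critical tuning $\beta_{\vartheta,\e}^2 = \frac{2\pi}{-\log\e}+\frac{\rho+o(1)}{(-\log\e)^2}$, is exactly the scaling limit analyzed in \cite{BC98} and refined by Caravenna--Sun--Zygouras \cite{CSZ19a}: it produces the weighted Dickman Green's function $G_\vartheta(v-u)$ defined in Section~\ref{sec:prel}. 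Multiplying by $p_{2(v-u)}(y-x) = G_\vartheta(v-u,y-x)/G_\vartheta(v-u)$ together with the squared envelopes $(P_u u_0)(x)^2(P_{t-v}\phi)(y)^2$ and accounting for the leading constant $4\pi$ (from the $n=1$ term $\beta_{\vartheta,\e}^2 \cdot \frac{1}{4\pi(v-u)}$ that seeds the Cauchy-type recursion) yields precisely the claimed double integral.

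The main obstacle is justifying the exchange of $\sum_{n\geq 1}$ and $\lim_{\e\to 0}$: each term is of order one in $\e$, so one needs tail bounds on $\|J_n^\e\|^2$ that are uniform in both $n$ and $\e$. Such bounds follow from the moment estimates of \cite{CSZ19b,GQT21} together with the integrability of $G_\vartheta$ on compact time intervals, supplied by the explicit form of $f_s(t)$. Once the $\e$-limit is identified as the claimed expression, the equality with $\mathrm{Var}(\mathscr{Z}_t^\vartheta(u_0,\phi))$ is automatic: the convergence $u^{\vartheta,\e}(t,\cdot)\Rightarrow \mathscr{Z}_t^\vartheta$ from \cite{Tsa24} combined with the uniform second-moment control just obtained gives uniform integrability of $\bigl(\int u^{\vartheta,\e}\phi\bigr)^2$, so variances pass to the limit. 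The extension to $u_0\in C_b(\R^2)$ and $\phi\in C_c(\R^2)$ follows by localization and approximation, exploiting bilinearity of the right-hand side in $u_0\otimes u_0$ and $\phi\otimes\phi$ together with the continuity of $G_\vartheta(v-u,y-x)$ for $v>u$.
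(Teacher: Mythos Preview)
The paper does not supply its own proof of this proposition: it is stated in Section~\ref{sec:prel} as a preliminary result, attributed to \cite{BC98}, and no argument is given. Your sketch via the Wiener chaos expansion of $u^{\vartheta,\e}$, with the renewal-type resummation of the intermediate chain producing $G_\vartheta$, is exactly the method of \cite{BC98} as later refined in \cite{CSZ19a,CSZ19b}; the second equality, identifying the limit with $\mathrm{Var}(\mathscr{Z}_t^\vartheta(u_0,\phi))$, is then a consequence of the convergence in \cite{CSZ23,Tsa24} together with the uniform second-moment bounds you invoke. So there is nothing to contrast: your approach \emph{is} the literature the paper is citing.

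One minor slip worth flagging: the sentence ``the outer propagators combine via Chapman--Kolmogorov into $p_{2(v-u)}(y-x)$'' is not quite right. The factor $p_{2(v-u)}(y-x)$ does not come from the outer propagators $(P_u u_0)$ and $(P_{t-v}\phi)$; rather, it emerges from integrating the \emph{intermediate} spatial variables in the squared chain (two copies of $\prod p_{s_{i+1}-s_i}$), while the outer propagators are what yield the squared envelopes $(P_u u_0)(x)^2$ and $(P_{t-v}\phi)(y)^2$ that remain in the final formula. This is only a matter of phrasing and does not affect the correctness of the overall outline.
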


We can obtain an upper bound of the second moment of the mass of shrinking balls by using Proposition \ref{prop:second}. 

\begin{prop}\label{prop:diver}
For each $\vartheta\in \R$ and $t>0$, there exists a constant $C_{\vartheta,t}\in (0,\infty)$ such that \begin{align*}
&\varlimsup_{\e\to 0}\frac{E\left[\mathscr{Z}_t^\vartheta(B_\e(0),B_\e(0))^2\right]
}{(2\pi \e^2)^4\left( \log \frac{1}{\e}\right)^2}\leq C_{\vartheta,t}.
\end{align*}
\end{prop}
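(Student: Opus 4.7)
The plan is to bound $E[\mathscr{Z}_t^\vartheta(B_\e(0),B_\e(0))^2]$ via the splitting
\begin{equation*}
E[\mathscr{Z}_t^\vartheta(B_\e(0),B_\e(0))^2]=\mathrm{Var}(\mathscr{Z}_t^\vartheta(B_\e(0),B_\e(0)))+\bigl(E[\mathscr{Z}_t^\vartheta(B_\e(0),B_\e(0))]\bigr)^2.
\end{equation*}
By Proposition~\ref{prop:prop1}, the mean equals $\int_{B_\e(0)\times B_\e(0)}p_t(x,y)\,dx\,dy=O(\e^4)$, so the squared-mean contribution is $O(\e^8)$, which is $o(\e^8(\log\tfrac{1}{\e})^2)$. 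Hence it suffices to show $\mathrm{Var}(\mathscr{Z}_t^\vartheta(B_\e(0),B_\e(0)))\leq C_\vartheta\,\e^8(\log\tfrac{1}{\e})^2$ for small $\e$.

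I would then apply Proposition~\ref{prop:second} with $u_0=\phi=\ind_{B_\e(0)}$ and unfold the squared semigroups via $(P_u\ind_{B_\e(0)}(x))^2=\int_{B_\e(0)^{\times 2}}p_u(x-z_1)p_u(x-z_2)\,dz_1\,dz_2$. The Gaussian identity $p_u(x-z_1)p_u(x-z_2)=p_{u/2}(x-\bar z)\,p_{2u}(z_1-z_2)$ with $\bar z=(z_1+z_2)/2$ (and its counterpart with $\bar w=(w_1+w_2)/2$) separates centre-of-mass from relative motion. Integrating out $x$ and $y$ by Chapman--Kolmogorov and substituting $s=v-u$ produces
\begin{equation*}
\mathrm{Var}=4\pi\int_{B_\e(0)^{\times 4}}dz_1\,dz_2\,dw_1\,dw_2\int_0^t ds\,G_\vartheta(s)\int_0^{t-s}du\,p_{2u}(z_1-z_2)\,p_{2(t-s-u)}(w_1-w_2)\,p_{\frac{t+3s}{2}}(\bar w-\bar z).
\end{equation*}
Since $s\geq 0$, $p_{\frac{t+3s}{2}}(\bar w-\bar z)\leq\frac{1}{\pi t}$, and integrating $\bar z,\bar w$ over $B_\e(0)$ contributes a factor $(\pi\e^2)^2$.

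For the relative positions, the estimate $\int_{B_\e(0)^{\times 2}}p_{2u}(z_1-z_2)\,dz_1\,dz_2\leq\pi\e^4/(u\vee\e^2)$ (obtained by combining $p_{2u}\leq 1/(4\pi u)$ with $\int p_{2u}=1$) reduces the task to
\begin{equation*}
\int_0^t G_\vartheta(s)\int_0^{t-s}\frac{du}{(u\vee\e^2)\,((t-s-u)\vee\e^2)}\,ds\leq C_\vartheta(\log\tfrac{1}{\e})^2.
\end{equation*}
A direct calculation shows the inner $u$-integral is $O\bigl(\log(\tfrac{1}{\e})/((t-s)\vee\e^2)\bigr)$. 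The outer $s$-integration against $G_\vartheta(s)/(t-s)$ then splits into $(0,t/2)$ --- where $1/(t-s)$ is bounded and local integrability of $G_\vartheta$ near $0$ (from the Dickman-subordinator representation, cf.\ \cite{CSZ19a}) controls the integral --- and $(t/2,t-2\e^2)$, where $G_\vartheta$ is continuous and bounded and the $1/(t-s)$ singularity, cut off at scale $\e^2$, produces the second $\log(\tfrac{1}{\e})$ factor.

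The main technical obstacle is the careful bookkeeping of these two independent $\log(\tfrac{1}{\e})$ factors: the diffusive-spreading singularity regularised at $u\sim\e^2$ and the renewal-type singularity at $s=t$ regularised at $t-s\sim\e^2$. Verifying that the singular behaviour of $G_\vartheta$ near $s=0$ yields no further divergence, and that the residual boundary contributions (from $u\in(0,\e^2)\cup(t-s-\e^2,t-s)$ and $s\in(t-2\e^2,t)$) are of strictly lower order, completes the estimate.
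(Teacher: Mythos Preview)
Your proof is correct and follows essentially the same approach as the paper's: apply the variance formula \eqref{eq:varint}, decouple the squared heat kernels via the Gaussian product identity, and extract two $\log\frac{1}{\e}$ factors from the endpoint singularities using the local integrability of $G_\vartheta$ near $0$ and its boundedness away from $0$. The only cosmetic difference is that the paper first dominates $\frac{1}{2\pi\e^2}\ind_{B_\e(0)}\leq\sqrt{e}\,p_{\e^2}$ to replace balls by Gaussians (giving the regularized denominator $(u+\e^2)$), whereas you keep the indicators and bound $\int_{B_\e(0)^{\times 2}}p_{2u}(z_1-z_2)\,dz_1\,dz_2\leq\pi\e^4/(u\vee\e^2)$ directly; these lead to equivalent integrals, and your iterated $u$-then-$s$ estimate plays the same role as the paper's three-region decomposition of the $(u,v)$-simplex.
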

\begin{proof} It is easy to verify that 
$\dis \frac{1_{B_\e(0)}(x)}{2\pi \e^2}\leq \sqrt{e}p_{\e^2}(x)$ for $\e>0$ and $x\in\R^2$. Hence, we have \begin{align}
&\frac{E\left[\mathscr{Z}_t^\vartheta(B_\e(0),B_\e(0))^2\right]
}{(2\pi \e^2)^4}\\
&\leq 4\pi e^2 \iint_{0<u<v<t}\dd u\dd v\iint_{\R^2\times \R^2}\dd x\dd yp_{u+\e^2}(x)^2G_\vartheta(v-u,y-x)p_{t-v+\e^2}(y)^2 \label{eq:varbdd1}
\end{align}
Since we have $p_t(x,y)p_t(x',y)=p_{2t}(x,x')p_{\frac{t}{2}}\left(\frac{x+x'}{2},y\right)$,  \begin{align}
&\text{the right-hand side of \eqref{eq:varbdd1}}\\
&=\iint_{0<u<v<t}\frac{ e^2}{4\pi (u+\e^2)(t-v+\e^2)} G_{\vartheta}(v-u)p_{\frac{u+\e^2+t-v+\e^2}{2}+2(v-u)}(0)\dd u\dd v\notag\\
&\leq \frac{e^2}{4\pi^2 t }\iint_{0<u<v<t}\frac{ G_{\vartheta}(v-u)}{ (u+\e^2)(t-v+\e^2)} \dd u\dd v.
\end{align}
We now estimate the last integral by splitting the domain into three regions $\{0<u<v\leq \frac{2t}{3}\}$, $\{\frac{t}{3}\leq u<v<t\}$, and $\{0<u<\frac{t}{3},\frac{2t}{3}<v<t\}$.
Recall  that $G_\vartheta(u)$ is integrable on every finite interval $(0,T)$ and bounded on $(a,b)$ for any $0<a<b<\infty$ \cite[Proposition 1.6]{CSZ19a}.
Hence, there exists a constant $C\in (0,\infty)$ (which may vary from line to  line) such that\begin{align*}
\iint_{0<u<v<\frac{2t}{3}}\frac{ G_{\vartheta}(v-u)}{ (u+\e^2)(t-v+\e^2)} \dd u\dd v&\leq C\iint_{0<u<v<\frac{2t}{3}}\dd u\dd v\frac{G_\vartheta(v-u)}{u+\e^2}\\
&\leq C\log \frac{1}{\e}\\
\iint_{\frac{t}{3}<u<v<t}\frac{ G_{\vartheta}(v-u)}{ (u+\e^2)(t-v+\e^2)} \dd u\dd v&\leq C\iint_{\frac{2t}{3}<u<v<t}\dd u\dd v\frac{G_\vartheta(v-u)}{t-v+\e^2}\\
&\leq C\log \frac{1}{\e}\\
\iint_{0<u<\frac{t}{3},\frac{2t}{3}<v<t}\frac{ G_{\vartheta}(v-u)}{ (u+\e^2)(t-v+\e^2)} \dd u\dd v&\leq C\iint_{0<u<v<\frac{t}{3}}\dd u\dd v\frac{1}{(u+\e^2)(t-v+\e^2)}\\
&\leq C\left(\log \frac{1}{\e}\right)^2.
\end{align*}
 
\end{proof}

\section{Proof of Theorem \ref{thm:localposi}}\label{sec:proofofthm1}

\subsection{Idea of the proof}

We first give a brief outline  of the proof with $a=0$ for simplicity.


To estimate the probability $P\left(\log \mathscr{Z}^\vartheta_t(B_r(0),B_r(0))<-x\right)$ ($x>0$), we begin with an elementary bound \begin{align*}
P\left(\log \mathscr{Z}^\vartheta_t(B_r(0),B_r(0))<-x\right)\leq P\left(\bigcap_{i=1}^{N^2} \left\{\mathscr{Z}^\vartheta_t(A_i,A_i)<e^{-x}\right\}\right)
\end{align*}
where $A_i$ ($i=1,\dots,N^2$) are disjoint balls in $B_r(0)$ with radius $\frac{r}{10N} $. If $\left\{\mathscr{Z}^\vartheta_t(A_i,A_i)\right\}$ were independent, then the right-hand side would be bounded by \begin{align*}
P\left( \mathscr{Z}^\vartheta_t(A_i,A_i)<e^{-x}\right)^{N^2}\leq  \exp\left(-N^2 P\left(\mathscr{Z}^\vartheta_t(A_i,A_i)\geq e^{-x}\right)\right).
\end{align*}
However, since $\left\{\mathscr{Z}^\vartheta_t(A_i,A_i)\right\}$ are not independent, this bound fails. To recover independence, we restrict the paths of Brownian motions in the Feynman-Kac formula of the solution of the mollified stochastic heat equation  $u^{\vartheta,\e}(t,x)$.
Specifically, we consider the Feynman-Kac formula of $\int_{B_r(0)} u^{\vartheta,\e}(t,x)\dd x$ for the initial condition $1_{B_r(0)}$: \begin{align*}
&\int_{B_r(0)} u^{\vartheta,\e}(t,y)\dd y\\
&\quad :=\int_{B_r(0)}\dd x \mathbf{E}_x\left[\exp\left(\beta_{\vartheta,\e}\int_0^t \dot{\mathcal{W}}^\e(s,\omega_s)\dd s-\frac{\beta_{\vartheta,\e}^2}{2}J_\e(0)t\right):\omega_t\in B_r(0)\right],
\end{align*}
which converges weakly to $\mathscr{Z}^\vartheta_t(B_r(0),B_r(0))$, where $\mathbf{P}_x$ is the law of the Brownian motion $\omega$ starting at $x$, $\mathbf{E}_x$ denotes the expectation with respect to $\mathbf{P}_x$, and  $J_\e(0)=\frac{1}{\e^2}\int j(x)j(-x)\dd x$. 

 Suppose $U_1,\dots,U_{N^2}\subset [0,t]\times \R^2$ are space-time regions  such that:\begin{itemize}
\item The distance between any two regions is greater than $\delta \frac{r}{N}$ for some $\delta>0$. 
\item The spatial slices at $s=0$ and $s=t$ coincide with $A_i$, i.e.~$ U_i|_{\{s=0\}}=U_i|_{\{s=t\}}=A_i$ ($i=1,\dots,N^2$).
\end{itemize} 
Then, consider\begin{align}
u^{\vartheta,\e}(U_i)=\int_{\R^2}\dd x \mathbf{E}_x\left[\exp\left(\beta_{\vartheta,\e}\int_0^t \dot{\mathcal{W}}^\e(s,\omega_s)\dd s-\frac{\beta_{\vartheta,\e}^2}{2}J_\e(0)t\right):\omega[0,t]\subset U_i\right],\label{eq:uU}
\end{align} 
where $\omega[0,t]:=\{(s,\omega_s):s\in [0,t]\}$ is the graph of $\omega$ on $[0,t]$.
These random variables are independent for sufficiently small $\e>0$. 

Moreover, $\{u^{\vartheta,\e_n}(U_i)\}_{i=1}^{N^2}$ has nontrivial weak limit point $\{\mathscr{Z}^{\vartheta}_t(U_i)\}_{i=1}^{N^2}$ which are also independent. 

Therefore, to estimate the lower tail probability,  it suffices  to control $P\left(\mathscr{Z}^\vartheta_t(U_i)\geq e^{-x}\right)$.

\subsection{Proof of Theorem \ref{thm:localposi}}

Fix $N\geq 1$, $r>0$, and $\alpha>2$.  We start with $a=0\in \R^2$ for simplicity.

Hereafter, $x$ may denote the integer part of $x\in \R$ when it is clear from the context.

We consider the curved tubes defined by \begin{align}
\mathrm{C}_{N}=\left\{(s,x)\in [0,t]\times \R^2:
|x|< r_{N,r}(s)
 \right\},\label{eq:CN}
\end{align}
where \begin{align}
r_{N,r}(s)=\begin{cases}
\frac{r}{20N}+(2s)^\frac{1}{\alpha}\quad &\text{for }0\leq s\leq \frac{t}{2}\\
r_{N,r}(t-s)\quad &\text{for }\frac{t}{2}\leq s\leq t
\end{cases}.\label{eq:radius}
\end{align}


Next, we  introduce a family of space-time regions $\mathrm{U}_n^{(N)}(r)$, that are obtained by shifting and folding $C_N$ such that those centers move along a piecewise linear curve. Precicely, they are defined as follows: \begin{align*}
\mathrm{U}_n^{(N)}(r):=\left\{(s,x)\in [0,t]\times \R^2: 
x\in \mathrm{D}_{n,r}(s)
\right\},
\end{align*}
where $\mathrm{D}_{n,r}(s)$  is a disc with center $o_{n,r}(s)$  and radius $r_{N,r}(s)$, given by: 
 \begin{align}
o_{n,r}(s)=\begin{cases}
\dis \left(\frac{4rn}{5N}+C_{\alpha,r,t}n^\alpha s,0\right) \quad &\text{for } 0\leq s\leq b_N\\
\dis \left(o_{n,r}(b_N)+(s-b_N),0\right)\quad &\text{for }b_N\leq s\leq \frac{t}{2}\\
\dis \left(o_{n,r}(t-s),0\right)\quad &\text{for }\frac{t}{2}\leq s\leq t
\end{cases},\label{eq:center}
\end{align}
where $b_N=\frac{1}{N^{\alpha-1}}$ and $C_{\alpha,r,t}>0$ is a  constant which depends on $\alpha$, $r$, and $t$, to  be chosen in Proposition \ref{prop:disjoint}. Here $o_{n,r}(s)$ moves along a piecewise linear path described \eqref{eq:center}, and  $r_{N,r}(s)$ is chosen as in \eqref{eq:radius} so that the cross-sectional radius expands near $s=0$.

Finally, we define  $\mathrm{U}_n^{(N,j)}(r)$ ($j=0,\dots,\frac{N}{15}$) by rotating $\mathrm{U}_n^{(N)}(r)$ anticlockwise by $\frac{20\pi}{N}j$ in space.  

\begin{rem}
The radius grows like $s^\frac{1}{\alpha}$ near $s=0$ to prevent Brownian paths from escaping the regions ``immediately".
\end{rem}

\begin{figure}[t]
\centering
\begin{minipage}[b]{0.49\columnwidth}
    \centering
    \includegraphics[width=0.9\columnwidth]{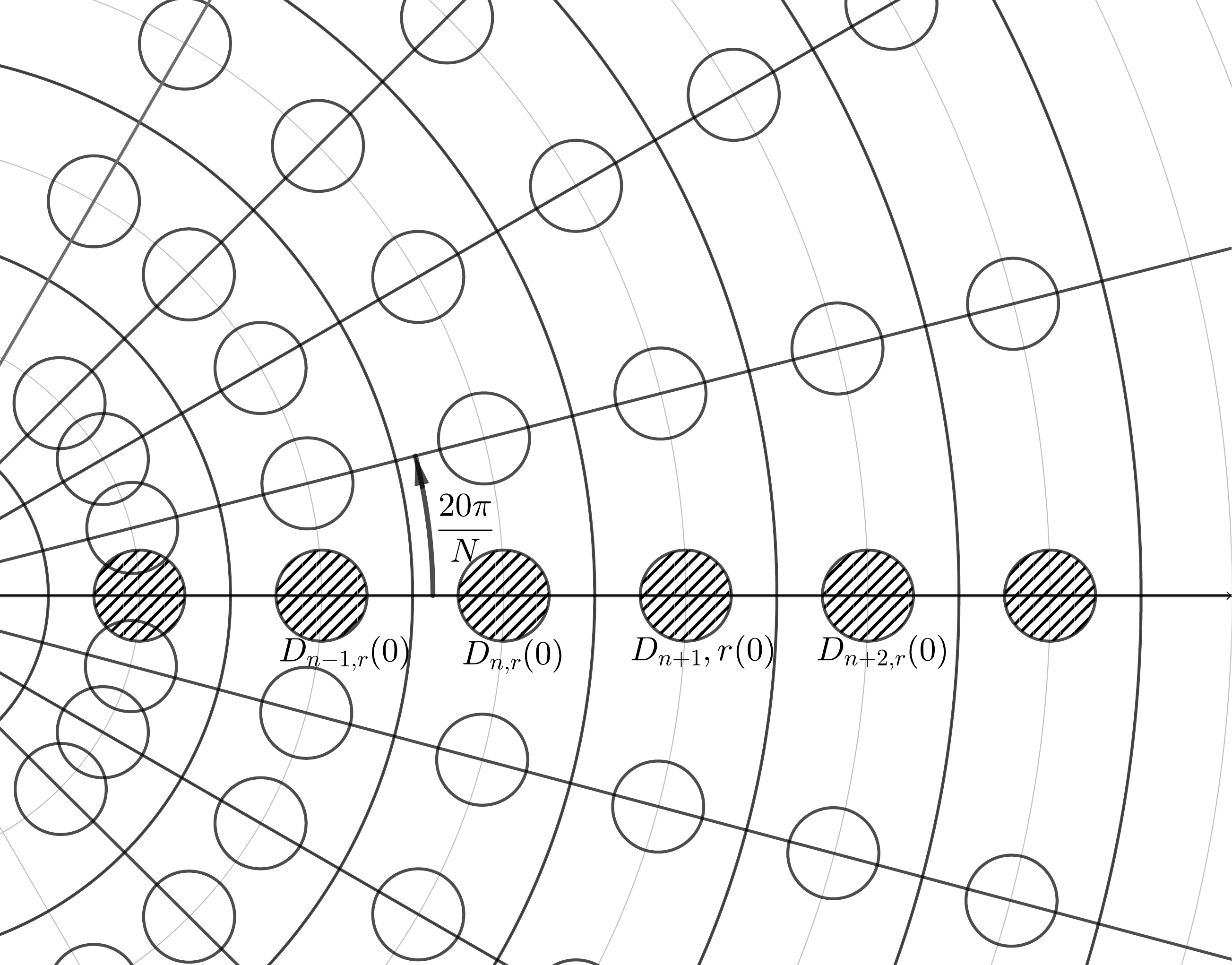}
    \caption{Image of $\{\mathrm{U}_{n}^{(N,j)}(r)|_{s=0}\}$.}
    \label{fig:a}
\end{minipage}
\begin{minipage}[b]{0.49\columnwidth}
    \centering
    \includegraphics[width=0.9\columnwidth,pagebox=cropbox,clip]{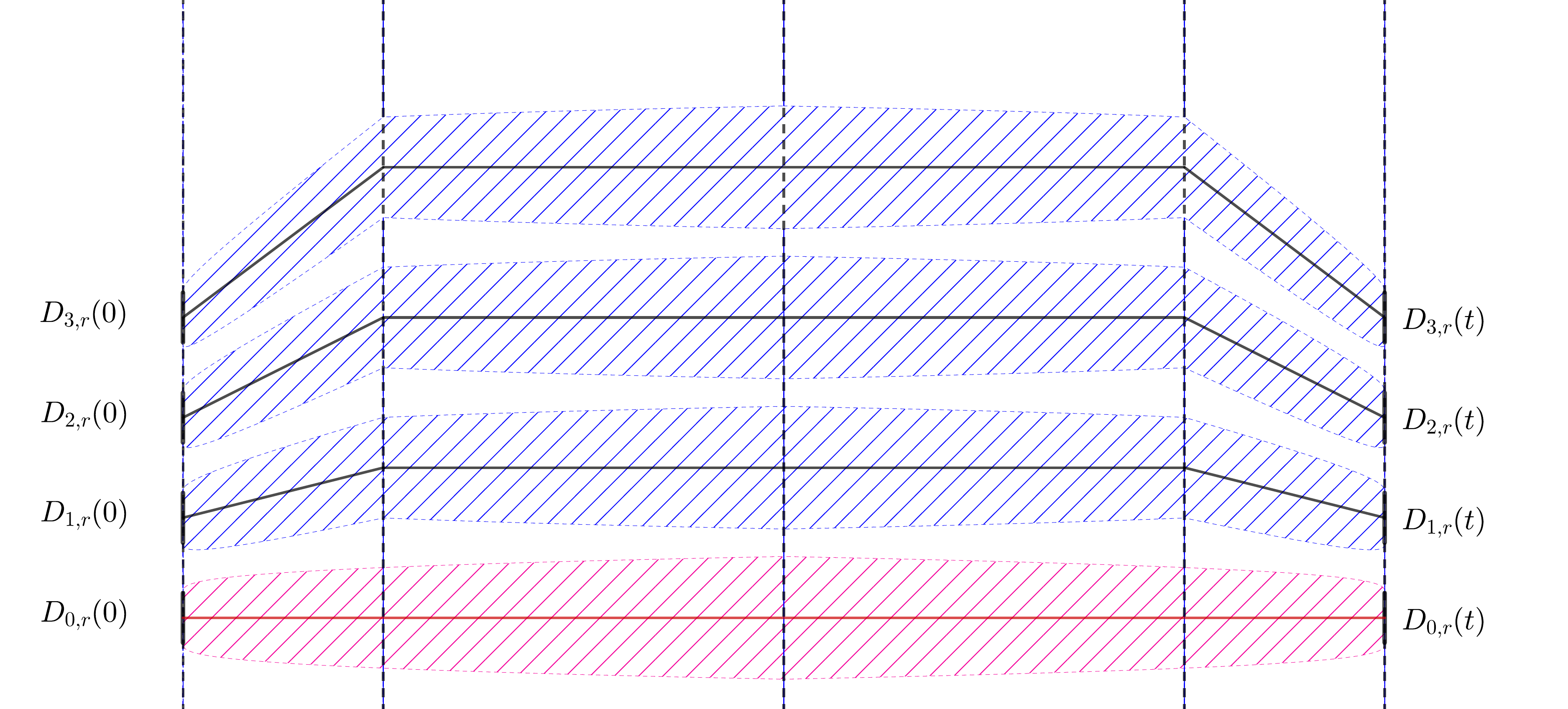}
    \caption{Image of the evolutions of $D_{n,r}$. The blue shapes are obtained from the red shape by applying a spatial shift followed by a folding transformation. The polyline represents the geometric centerline of the shape.}
    \label{fig:b}
\end{minipage}
\end{figure}

\begin{prop}\label{prop:disjoint}
For $\alpha>2 $ and $r>0$, there exists a constant $C_{\alpha,r,t}>0$ such that for large $N\geq 2$, $\{\mathrm{U}_n^{(N,j)}(r): n=\frac{N}{2},\dots,N,j=0,\dots,\frac{N}{15}\}$ are pairwise disjoint.
\end{prop}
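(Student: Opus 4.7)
The plan is to verify the proposition by a direct geometric calculation: for each pair of indices $(n, j) \neq (m, j')$, lower-bound the distance between the time-$s$ centers of the two rotated discs and compare with the sum of their radii $r_{n,r}(s) + r_{m,r}(s)$. By the time-reversal symmetry of \eqref{eq:center}--\eqref{eq:radius} about $s = t/2$, it suffices to work on $s \in [0, t/2]$, and we further split this interval into $[0, b_N]$ and $[b_N, t/2]$ following the definition of $o_{n,r}$.

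\textbf{Case 1 ($j = j'$, $n \neq m$).} Taking $j = 0$ by rotational invariance of the problem, both centers lie on the positive $x$-axis, and for consecutive indices $n, n+1$ one has
\[
|o_{n+1,r}(s) - o_{n,r}(s)| \;\geq\; \frac{4r}{5N} + C_{\alpha,r,t}\bigl[(n+1)^{\alpha} - n^{\alpha}\bigr]\min(s, b_N).
\]
For $n \geq N/2$ we have $(n+1)^{\alpha} - n^{\alpha} \geq \alpha (N/2)^{\alpha-1}$, so at $s = b_N = N^{1-\alpha}$ the drift contribution is at least $C_{\alpha,r,t}\,\alpha/2^{\alpha-1}$, a positive constant independent of $N$. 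On $[b_N, t/2]$ this constant dominates the sum of radii, which is bounded by $r/(10N) + 2\,t^{1/\alpha}$, once $C_{\alpha,r,t}$ exceeds a fixed multiple of $t^{1/\alpha}$. On $[0, b_N]$ the initial gap $4r/(5N)$ and the linear-in-$s$ drift term must together absorb the sublinear radius term $2(2s)^{1/\alpha}$; a short calculus check at the balance scale $s \sim N^{-\alpha}$, where $(2s)^{1/\alpha}\sim 1/N$, shows this holds once $C_{\alpha,r,t}$ is taken sufficiently large.

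\textbf{Case 2 ($j \neq j'$).} For $|j - j'| \geq 1$ the rotation by $\theta_j - \theta_{j'} = 20\pi(j - j')/N$ yields the lower bound
\[
\bigl|\text{rotated center}_{n,j}(s) - \text{rotated center}_{m,j'}(s)\bigr| \;\geq\; 2 \min\bigl(|o_{n,r}(s)|, |o_{m,r}(s)|\bigr)\,\sin\!\bigl(10\pi/N\bigr)
\]
when the radial magnitudes are comparable (and otherwise their difference itself provides the bound, reducing to Case 1). For $n \geq N/2$ we have $|o_{n,r}(s)| \geq 2r/5 + C_{\alpha,r,t}(N/2)^{\alpha}\min(s, b_N)$, so at $s = b_N$ the second term is of order $N$, and $2 \cdot \Theta(N)\cdot \sin(10\pi/N)$ is bounded below by a positive constant uniformly in large $N$. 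This absorbs the sum of radii $\leq r/(10N) + 2 t^{1/\alpha}$ on $[b_N, t/2]$. For $s \in [0, b_N]$ the inequality $|o_{n,r}(s)| \geq 2r/5$ combined with the smallness of $(2s)^{1/\alpha}$ on this scale suffices.

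The main obstacle is the small-$s$ window $[0, b_N]$, where the radius $(2s)^{1/\alpha}$ grows faster than any linear drift as $s \to 0^+$ (since $1/\alpha < 1$), so disjointness cannot come from the drift alone. The specific choice $b_N = N^{1-\alpha}$ is tuned precisely so that $(2b_N)^{1/\alpha} = 2^{1/\alpha} N^{(1-\alpha)/\alpha}$ remains a positive power of $1/N$, while the accumulated drift $C_{\alpha,r,t}(N/2)^{\alpha} b_N = C_{\alpha,r,t}/2^{\alpha}$ is already $\Theta(1)$. Careful bookkeeping of these constants, together with choosing $C_{\alpha,r,t}$ large enough in terms of $\alpha$, $r$, and $t$, closes the argument.
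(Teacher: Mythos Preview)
Your overall plan---split the time interval at $b_N$, compare center separation with the sum of radii, and use time-reversal symmetry---is exactly the paper's approach, and your Case~1 is correct. The difficulty is your handling of Case~2 on the window $s\in[0,b_N]$.

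You claim there that ``the inequality $|o_{n,r}(s)|\geq 2r/5$ combined with the smallness of $(2s)^{1/\alpha}$ on this scale suffices.'' It does not. From $|o_{n,r}(s)|\geq 2r/5$ alone the chord bound gives a center separation of order $\sin(10\pi/N)\asymp 1/N$, whereas at $s=b_N=N^{1-\alpha}$ the sum of the two radii is
\[
\frac{r}{10N}+2(2b_N)^{1/\alpha}\;=\;\frac{r}{10N}+2^{1+1/\alpha}N^{-1+1/\alpha},
\]
which is of order $N^{-1+1/\alpha}\gg N^{-1}$ for every finite $\alpha>2$. So the angular separation coming from the constant radial lower bound cannot absorb the radius growth near $s=b_N$; for instance with $\alpha=3$ and $N=10^4$ the radii already exceed your claimed separation by a factor of two.

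The fix is the one you already wrote down but then discarded: keep the drift term in the radial magnitude, i.e.\ use $|o_{n,r}(s)|\geq \tfrac{2r}{5}+C_{\alpha,r,t}(N/2)^{\alpha}s$ on $[0,b_N]$. Then the chord bound becomes
\[
2|o_{n,r}(s)|\sin\!\bigl(10\pi/N\bigr)\;\gtrsim\;\frac{8\pi r}{N}+\frac{20\pi C_{\alpha,r,t}}{2^{\alpha}}\,N^{\alpha-1}s,
\]
and you are back to the \emph{same} inequality shape as in Case~1: a constant of order $1/N$ plus a linear-in-$s$ term with coefficient $\asymp C_{\alpha,r,t}N^{\alpha-1}$, against $2(2s)^{1/\alpha}$. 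The calculus minimization you already did (balance at $s\sim N^{-\alpha}$, maximum of the deficit $\asymp C_{\alpha,r,t}^{-1/(\alpha-1)}/N$) then closes the estimate once $C_{\alpha,r,t}$ is large. This is precisely how the paper handles its item~(2): it writes the $y$-coordinate gap as $\widetilde C_{\alpha,r,t}\,n^{\alpha}s/N+\text{const}/N-2(2s)^{1/\alpha}$ and explicitly says ``in the same argument as (1)'' to conclude positivity on $[0,b_N]$.
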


\begin{proof}
It is enough to verify the following two statements:
\begin{enumerate}[label=(\arabic*)]
\item\label{item:xx} The maximum of the $x$-coordinate of $\mathrm{U}_n^{(N,0)}$ is strictly smaller than the minimum of the $x$ coordinate of $\mathrm{U}_{n+1}^{(N,0)}$.
\item\label{item:yy} The maximum of the $y$ coordinate of $\mathrm{U}_n^{(N,0)}$ is strictly smaller than the minimum of the $y$ coordinate of $\mathrm{U}_n^{(N,1)}$.
\end{enumerate}
We begin with \ref{item:xx}. Let  $x_{n,-}^{(0)}(s)$ and $x_{n,+}^{(0)}(s)$ denote, respectively, the minimum   and   the maximum of the $x$-th coordinate of $\mathrm{U}_n^{(N,0)}$ at time $s$.

Then, we have \begin{align}
&x_{n+1,-}^{(0)}(s)-x_{n,+}^{(0)}(s)= C_{\alpha,r,t}((n+1)^\alpha- n^\alpha) s-2(2s)^\frac{1}{\alpha} +\frac{7r}{10N} \quad \text{for }0\leq s\leq b_N\label{eq:l-l+}
\end{align}
holds. 
Using the inequality $(n+1)^\alpha-n^\alpha\geq \alpha n^{\alpha-1}$, we obtain \begin{align*}
&C_{\alpha,r,t}((n+1)^\alpha- n^\alpha) s-2(2s)^\frac{1}{\alpha} +\frac{7r}{10N}\\
&\geq C_{\alpha,r,t}\beta n^{\alpha-1} s-2(2s)^\frac{1}{\alpha} +\frac{7r}{10N} \geq -c_{\alpha}\left(C_{\alpha,r,t}\alpha\right)^\frac{\alpha}{1-\alpha}\frac{1}{N}+\frac{7r}{10N}>0
\end{align*}
for $n\geq \frac{N}{2}$, provided that  $C_{\alpha,r,t}>0$ is chosen sufficiently large. Here $c_{\alpha }>0$ is a constant depending only on $\alpha$.

Moreover, we have\begin{align*}
x_{n+1,-}^{(0)}(b_N)-x_{n,+}^{(0)}(b_N)&\geq C_{\alpha,r,t}\beta n^{\alpha-1}b_N-2(2s)^\frac{1}{\alpha}+\frac{7r}{10N}\\
&\geq CC_{\alpha,r,t}  \geq 20t^\frac{1}{\alpha}
\end{align*}
for $C_{\alpha,r,t}$ sufficiently large.

Then, it follows that  $x_{n+1,-}^{(0)}(s)>x_{n,+}^{(0)}(s)$ for $s\in [b_N,\frac{t}{2}]$ since \begin{align*}
x_{n+1,-}^{(0)}(s)-x_{n,+}^{(0)}(s)&\geq x_{n+1,-}^{(0)}(b_N)-x_{n,+}^{(0)}(b_N)-2(2s)^\frac{1}{\alpha}-2(b_N)^\frac{1}{\alpha}>0
\end{align*}
for $\frac{t}{2}\leq s\leq t$.

We now turn to \ref{item:yy}.  Let $y_{n,-}^{(1)}(s)$ denote  the minimum  of the $y$-th coordinate of $\mathrm{U}_n^{(N,1)}$ at time $s$, and let 
 $y_{n,+}^{(0)}(s)$ denote the maximum  of the $y$-th coordinate of $\mathrm{U}_n^{(N,0)}$ at time $s$.

By definition of $\mathrm{U}_n^{(N,1)}$, we have \begin{align*}
y_{n,-}^{(1)}(s)=\left(C_{\alpha,r,t}n^{\alpha} s+\frac{4r}{5N}\right)\sin \frac{20\pi}{N}-\frac{r}{20N}-(2s)^\alpha
\end{align*}
Thus, it holds that \begin{align*}
&y_{n,-}^{(1)}(s)-y_{n,+}^{(0)}(s)
\geq \widetilde{C}_{\alpha,r,t}\frac{n^{\alpha}}{N} s +\frac{15r}{N}-2(2s)^\frac{1}{\alpha}, \quad s\in [0,b_N]
\end{align*}
for $n\geq \frac{N}{2}$ for all $N\geq 2$ sufficiently large. Here $\widetilde{C}_{\alpha,r,t}$ is a constant depending on $C_{\alpha,r,t}$,  and it is increasing in $C_{\alpha,r,t}>0$.
In the same argument as \ref{item:xx}, we find that $y_{n,-}^{(1)}(s)-y_{n,+}^{(0)}(s)>0$ for $0\leq s\leq b_N$ and $y_{n,-}^{(1)}(b_N)-y_{n,+}^{(0)}(b_N)\geq 10 t^\alpha$ if  $C_{\alpha,r,t}>0$ is taken sufficiently large.

By construction, we can see that \begin{align*}
(y_{n,-}^{(1)}(s)-y_{n,+}^{(0)}(s))-(y_{n,-}^{(1)}(b_N)-y_{n,+}^{(0)}(b_N))\geq -4(2s)^\frac{1}{\alpha}\quad \text{for }b_N\leq s\leq \frac{t}{2}.
\end{align*}
Hence $y_{n,-}^{(1)}(s)-y_{n,+}^{(0)}(s)\geq 0$ for $0\leq s\leq t$.
\end{proof}

For a compact set $K\subset [0,t]\times \R^2$, we define random variables \begin{align*}
&u_t^{\vartheta,\e}(K)\\
&:=\int_{B_r(0)}\dd x \mathbf{E}_x\left[\exp\left(\beta_{\vartheta,\e}\int_0^t \dot{\mathcal{W}}^\e(s,\omega_s)\dd s-\frac{\beta_{\vartheta,\e}^2}{2}J_\e(0)t\right):\omega[0,t]\subset  K, \omega_t\in B_r(0)\right].
\end{align*}

We note that  $u_t^{\vartheta,\e}(K)$ is the same as \eqref{eq:uU} with the additional restriction on $\omega_t$, and that $u_{t}^{\vartheta,\e}([0,t]\times\R^2)$ coincides with $\int_{B_r(0)}u^{\vartheta,\e}(t,x)\dd x$. 

For $p>1$, the family $\left\{u_t^{\vartheta,\e}(\mathrm{U}_n^{(N,j)}):n=\frac{N}{2},\dots,N,j=0,\dots,\frac{N}{15}\right\}$ satisfies  \begin{align*}
E\left[\left(\sum_{n=\frac{N}{2}}^{N}\sum_{j=0}^{\frac{N}{15}}u_t^{\vartheta,\e}(\mathrm{U}_n^{(N,j)})^2\right)^p\right]&\leq E\left[\left(\sum_{n=\frac{N}{2}}^{N}\sum_{j=0}^{\frac{N}{15}}u_t^{\vartheta,\e}(\mathrm{U}_n^{(N,j)})\right)^{2p}\right]\\
&\leq E\left[u_t^{\vartheta,\e}(B_r(0),B_r(0))^{2p}\right],
\end{align*}
since $\{u_t^{\vartheta,\e}(\mathrm{U}_n^{(N,j)})\}_{j=0,\dots,\frac{N}{15},n=\frac{N}{2},\dots,N}$ are positive.

By \cite{GQT21}, the right-hand side is bounded, and hence we may extract a subsequence  $\{\e_m\}_{m\geq 1}$ such that a weak limit   \begin{align*}
&\left(\mathscr{Z}_t^\vartheta(B_r(0),B_r(0)),\left\{\mathscr{Z}_t^{\vartheta}(\mathrm{U}_n^{(N,j)}):n=\frac{N}{2},\dots,N,j=0,\dots,\frac{N}{15}\right\}\right)\\
&:=\text{w-}\lim_{m\to\infty}\left(u_t^{\vartheta,\e_m}([0,t]\times \R^2),\left\{u_t^{\vartheta,\e_m}(\mathrm{U}_n^{(N,j)}):n=\frac{N}{2},\dots,N,j=0,\dots,\frac{N}{15}\right\}\right),
\end{align*}
exists. Moreover,
\begin{align*}
\mathscr{Z}_t^\vartheta(B_r(0),B_r(0))\geq \sum_{n=\frac{N}{2}}^{N}\sum_{j=0}^{\frac{N}{15}}\mathscr{Z}_t^{\vartheta}(\mathrm{U}_n^{(N,j)}).
\end{align*}

We remark that 
$\left\{\mathscr{Z}_t^{\vartheta}(\mathrm{U}_n^{(N,j)}):n=\frac{N}{2},\dots,N,j=0,\dots,\frac{N}{15}\right\}$ are independent random variables and that $\left\{\mathscr{Z}_t^{\vartheta}(\mathrm{U}_n^{(N,j)}):j=0,\dots,\frac{N}{15}\right\}$ are identically distributed. Here, we remark that $\mathrm{U}_{n}^{(N,j)}$ and $\mathrm{U}_m^{(N,k)}$ for $n\not=m$ are not identically distributed since the shapes of the regions are different. 

Hence, we find that \begin{align}
&P\left(\log \mathscr{Z}_t^\vartheta(B_r(0),B_r(0))<-3C_{\alpha,r,t}^2N^{\alpha+1}\right)\notag\\
&\leq P\left(\bigcap_{n=\frac{N}{2}}^{N}\bigcap_{j=0}^{\frac{N}{15}}\left\{\mathscr{Z}_t^{\vartheta}(\mathrm{U}_n^{(N,j)})<e^{-3C_{\alpha,r,t}^2N^{\alpha+1}}\right\} \right)\notag\\
&=\prod_{n=\frac{N}{2}}^{N}\prod_{j=0}^{\frac{N}{15}}P\left(\mathscr{Z}_t^{\vartheta}(\mathrm{U}_n^{(N,j)})<e^{-3C_{\alpha,r,t}^2N^{\alpha+1}} \right) \notag\\
&\leq \exp\left(-\frac{N}{15}\sum_{n=\frac{N}{2}}^{N}P\left(\mathscr{Z}_t^{\vartheta}(\mathrm{U}_n^{(N,0)})\geq  e^{-3C_{\alpha,r,t}^2N^{\alpha+1}} \right)\right).\label{eq:probbdd}
\end{align}

Thus, we will complete the proof of Theorem \ref{thm:localposi} once we prove the following lemma.

\begin{lem}\label{lem:boundofexp}
For each $t>0$, $\alpha>2$, and $r>0$, there exists a constant $c_{\vartheta,\alpha,r,t}>0$ such that for $N\geq 2$ sufficiently large, we have \begin{align*}
P\left(\mathscr{Z}_t^{\vartheta}(\mathrm{U}_n^{(N,0)})\geq e^{-3C_{\alpha,r,t}N^{\alpha+1}} \right)\geq \frac{c_{\vartheta,\alpha,r,t}}{(\log N)^2}
\end{align*}
for $n\geq \frac{N}{2}$.
\end{lem}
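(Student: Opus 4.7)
The plan is to apply the Paley-Zygmund (second-moment) inequality to $X = \mathscr{Z}_t^\vartheta(\mathrm{U}_n^{(N,0)})$: with $\theta = 1/2$, $P(X \geq \tfrac12 E[X]) \geq \tfrac14 E[X]^2/E[X^2]$. It will then suffice to establish (i) a first-moment lower bound $E[X] \geq c\, N^{-O(1)} \exp(-\tfrac12 C_{\alpha,r,t}^2 N^{\alpha+1})$, which for $N$ large is much larger than $2 e^{-3 C_{\alpha,r,t}^2 N^{\alpha+1}}$, and (ii) a second-moment upper bound $E[X^2] \leq C(\log N)^2\, E[X]^2$, which turns Paley-Zygmund into the claimed $c/(\log N)^2$ lower bound. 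The moments of $X$ are obtained as $\e \to 0$ limits of the moments of $u_t^{\vartheta,\e}(\mathrm{U}_n^{(N,0)})$ after standard uniform-integrability checks.

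For the first moment, the identity $E[u_t^{\vartheta,\e}(\mathrm{U}_n^{(N,0)})] = \int_{B_r(0)} \mathbf{P}_x(\omega \in \mathrm{U}_n^{(N,0)})\, \dd x$ is independent of $\e$ and reduces everything to a Brownian-motion confinement probability. I would apply Girsanov's theorem with the deterministic, piecewise-linear drift $v(s) = \dot o_{n,r}(s)$, converting the tilted tube into one centered at the fixed point $o_{n,r}(0)$; the Radon-Nikodym factor contributes $\exp(-\tfrac12 \int_0^t |v|^2 \dd s)$. This integral is dominated by the $[0, b_N]$ portion, where $|v|^2 = C_{\alpha,r,t}^2 n^{2\alpha}$, giving $\tfrac12 C_{\alpha,r,t}^2 n^{2\alpha} b_N \leq \tfrac12 C_{\alpha,r,t}^2 N^{\alpha+1}$, while the remaining intervals contribute $O(t)$. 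The residual drift-free confinement probability is at least $c/N^4$: initial integration over $A_n$ gives $|A_n| \sim N^{-2}$, the Gaussian transition forces the endpoint into $A_n$ contributing another $N^{-2}$, and on $[b_N, t - b_N]$ the tube radius $r_{n,r}(s) = r/(20N) + (2s)^{1/\alpha}$ dominates the Brownian spread $\sqrt s$ on $(0,1]$ (since $\alpha > 2$), so the intermediate confinement is an $O(1)$ event.

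For the second moment, adapting Proposition \ref{prop:second} to the tube-restricted setting yields a Dickman-kernel representation of $E[X^2]$ in which both Brownian transition densities $p_u$ and $p_{t-v}$ are restricted to the tube. Applying Girsanov to both paths generates a factor $\exp(-\int_0^t |v|^2 \dd s)$ that cancels exactly against the square of the first-moment Girsanov factor. The remaining singular integral of $G_\vartheta$ against the tube exhibits a logarithmic divergence controlled by the minimum tube radius $r/(20N)$ at the endpoints; mimicking Proposition \ref{prop:diver} with $\e$ replaced by $N^{-1}$ then yields $E[X^2] \leq C(\log N)^2 E[X]^2$. Substituting into Paley-Zygmund gives $P(X \geq \tfrac12 E[X]) \geq c/(\log N)^2$, and the first-moment bound ensures $\tfrac12 E[X] \geq e^{-3 C_{\alpha,r,t}^2 N^{\alpha+1}}$ for $N$ large.

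The main obstacle is the second-moment estimate. Tracking the Girsanov factors through both Brownian paths so that they cancel $E[X]^2$ exactly, and verifying that the Dickman-kernel integral against the drifted, time-varying tube produces no worse than $(\log N)^2$ blowup, require careful bookkeeping at the endpoints where the radius is smallest. This is precisely why the tube geometry (drift scale $C_{\alpha,r,t}$, radius profile $r/(20N) + (2s)^{1/\alpha}$, transition window $b_N = N^{-(\alpha-1)}$) was engineered as in Proposition \ref{prop:disjoint}. Passing to the $\e \to 0$ limit inside the Paley-Zygmund argument is routine given uniform higher-moment bounds for $u_t^{\vartheta,\e}(\mathrm{U}_n^{(N,0)})$.
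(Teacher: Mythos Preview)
Your second-moment step has a genuine gap. The Girsanov density $\mathcal{E}_n(\omega)=\exp\bigl(\int_0^t v\cdot \dd\omega-\tfrac12\int_0^t|v|^2\,\dd s\bigr)$ is not the deterministic constant $\exp(-\tfrac12\int|v|^2)$: it carries the stochastic integral $\int v\cdot \dd\omega$. On the confinement event $\{\omega\in\mathrm{C}_n\}$ one has $|\omega|\le 3t^{1/\alpha}+1$ and $\|v\|_\infty\le C_{\alpha,r,t}N^\alpha$, so this integral is $O(N^\alpha)$ but not $o(1)$, and one only gets $\mathcal{E}_n(\omega)\in[e^{-A_n-cN^\alpha},\,e^{-A_n+cN^\alpha}]$ with $A_n:=\tfrac12\int|v|^2\asymp N^{\alpha+1}$. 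In the Paley--Zygmund ratio you must use the lower end for $E[X]$ and the upper end for $E[X^2]$; the $e^{-2A_n}$ parts do cancel, but the residual mismatch is $e^{-O(N^\alpha)}$, which swamps the target $(\log N)^{-2}$. There is no obvious mechanism (e.g.\ independence of $\omega+\omega'$ and $\omega-\omega'$) that removes this, because the tube constraint $\{\omega,\omega'\in\mathrm{C}_n\}$ does not factor over those coordinates.

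The paper avoids taking moments of $X$ altogether. It applies the pathwise lower bound $\mathcal{E}_n(\omega)\ge e^{-\frac32 C_{\alpha,r,t}^2 N^{\alpha+1}}$ \emph{inside} the Feynman--Kac expectation itself, obtaining an almost-sure inequality $u_t^{\vartheta,\e}(\mathrm{U}_n^{(N,0)})\ge e^{-\frac32 C_{\alpha,r,t}^2 N^{\alpha+1}}\,Y_\e$, where $Y_\e$ is the same Feynman--Kac integral over $\mathrm{C}_n$ with the noise evaluated along the shifted path $h_n(\cdot,\omega)$. Translation invariance of white noise gives $Y_\e\overset{d}{=}u_t^{\vartheta,\e}(\mathrm{C}_n)$. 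Paley--Zygmund is then applied to the drift-free limit $\mathscr{Z}_t^\vartheta(\mathrm{C}_n)$: its first moment is the purely polynomial quantity $\int_{B_{r/(20N)}(0)}\mathbf{P}_x(\omega\in\mathrm{C}_n)\,\dd x\asymp N^{-4}$, and its second moment is crudely dominated by $E\bigl[\mathscr{Z}_t^\vartheta(B_{r/(20N)}(0),B_{r/(20N)}(0))^2\bigr]$, controlled by Proposition~\ref{prop:diver}. No Girsanov factors enter this ratio, so the $(\log N)^{-2}$ bound is clean; the extracted exponential is absorbed into the threshold, which is why the lemma is stated with $e^{-3C_{\alpha,r,t}^2N^{\alpha+1}}$ rather than $\tfrac12 E[X]$.
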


\begin{proof}[Proof of Theorem \ref{thm:localposi}]
Combining \eqref{eq:probbdd} with Lemma \ref{lem:boundofexp},
we obtain that for all sufficiently large $N$, \begin{align*}
P\left(\log \mathscr{Z}_t^\vartheta(B_r(0),B_r(0))<-3C_{\alpha,r,t}^2N^{\alpha+1}\right)\leq \exp\left(-\frac{N^2}{30}\frac{c_{\vartheta,\alpha,r,t}}{(\log N)^2}\right).
\end{align*}
 Replacing $N$ by any real number $s>2$, and adjusting the constants  $C_{\alpha,r,t}$ and $c_{\alpha,r,t}$ accordingly, we can rewrite the above estimate in the form 
\begin{align*}
P\left(\log \mathscr{Z}_t^\vartheta(B_r(0),B_r(0))<-C_{\alpha,r,t}s^{\alpha+1}\right)\leq \exp\left(-\frac{c_{\vartheta,\alpha,r,t}s^2}{(\log s)^2}\right).
\end{align*} 
Finally, substituting $x=C_{\alpha,r,t}s^{\alpha+1}$, we obtain \begin{align*}
P\left(\log \mathscr{Z}_t^\vartheta(B_r(0),B_r(0))<-x\right)\leq \exp\left(-\frac{c_{\vartheta,\alpha,r,t}x^\frac{2}{\alpha+1}}{(\log x)^2}\right)
\end{align*}
for all $x$ sufficiently large. Since $\alpha>2$ is arbitrary, we obtain the claimed upper bound on the lower tail.
\end{proof}


\begin{proof}[Proof of Lemma \ref{lem:boundofexp}]
Let $h_n:[0,t]\times \R^2\to \R^2 $ be the path transformation  defined by \begin{align*}
h_n(s,x,y)=
\begin{cases}
\left(\frac{4nr}{5N}+x+C_{\alpha,r,t}n^\alpha s,y\right)\quad &\text{for }s\in [0,b_N]\\
\left(h_n(b_N,s,y)+(s-b_N),y\right)\quad &\text{for }s\in [b_N,\frac{t}{2}]\\
h_n(t-s,x,y)\quad &\text{for }s\in [\frac{t}{2},t].
\end{cases}
\end{align*}
Here, $h_{n}$ is a piecewise linear map that sends the straight times-axis path $\{(s,0,0):s\in[0,t]\}$ to the piecewise linear curve $\{(s,o_{n,r}(s)):s\in [0,t]\}$.

Applying the Girsanov transformation, we have \begin{align}
&u_t^{\vartheta,\e}(\mathrm{U}_n^{(N,0)})\label{eq:unun0}\\
&=\int_{D_{n,r}(0)}\dd x\mathbf{E}_x\left[\mathcal{E}_n(\omega)\exp\left(\beta_{\vartheta,\e}\int_0^t \dot{\mathcal{W}}^\e(s,h_n(s,\omega_s))\dd s-\frac{\beta_{\vartheta,\e}^2}{2}J_\e(0)t\right):\omega[0,t]\subset  \mathrm{C}_{N}\right],\notag
\end{align} 
where $C_N$ is the curved tube defined in \eqref{eq:CN} and the Radon-Nikodym deriavtive $\mathcal{E}_n(\omega)$ is given by\begin{align*}
\mathcal{E}_n(\omega)&=\exp\left(C_{\alpha,r,t}n^\alpha(\omega_{b_N}-\omega_0)-\frac{C_{\alpha,r,t}^2n^{2\alpha}b_N}{2}\right)\\
&\times \exp\left((\omega_{\frac{t}{2}}-\omega_{b_N})-\frac{\frac{t}{2}-b_N}{2}\right)\\
&\times \exp\left(-(\omega_{t-b_N}-\omega_{\frac{t}{2}})-\frac{\frac{t}{2}-b_N}{2}\right)\\
&\times \exp\left(-C_{\alpha,r,t}n^\alpha(\omega_{t}-\omega_{t-b_N})-\frac{C_{\alpha,r,t}^2n^{2\alpha}b_N}{2}\right).
\end{align*}
We remark that if  $\omega[0,t] \subset  \mathrm{C}_N$,  $\max_{0\leq s\leq t}|\omega_s|\leq 3t^\frac{1}{\alpha}+1$. Recalling $b_N=\frac{1}{N^{\alpha-1}}$, we have \begin{align*}
\mathcal{E}_n(\omega)\geq 
\exp\left(-\frac{3}{2}C_{\alpha,r,t}^2N^{\alpha+1}\right)\quad \text{for }\omega\in \mathrm{C}_n, 
\end{align*}
for $n=\frac{N}{2},\dots,N$ if  $N\geq 2$ sufficiently large.

Thus, we can see that \begin{align*}
&u_t^{\vartheta,\e}(\mathrm{U}_n^{(N,0)})\\
&\geq \exp\left(-\frac{3}{2}C_{\alpha,r,t}^2N^{\alpha+1}\right)\\
&\hspace{2.5em}\times \int_{D_{n,r}(0)}\dd x\mathbf{E}_x\left[\exp\left(\beta_{\vartheta,\e}\int_0^t \dot{\mathcal{W}}^\e(s,h_n(s,\omega_s))\dd s-\frac{\beta_{\vartheta,\e}^2}{2}J_\e(0)t\right):\omega[0,t]\subset  \mathrm{C}_{N}\right].
\end{align*}

Here, we note that \begin{align*}
&\int_{D_{n,r}(0)}\dd x\mathbf{E}_x\left[\exp\left(\beta_{\vartheta,\e}\int_0^t \dot{\mathcal{W}}^\e(s,h_n(s,\omega_s))\dd s-\frac{\beta_{\vartheta,\e}^2}{2}J_\e(0)t\right):\omega[0,t]\subset   \mathrm{C}_{N}\right]\\
&\overset{d}{=}u_t^{\vartheta,\e}(\mathrm{C}_{N}).
\end{align*}

Then, we have \begin{align*}
P\left(\mathscr{Z}_t^{\vartheta}(\mathrm{U}_n^{(N,0)})\geq  e^{-3C_{\alpha,r,t}^2N^{\beta+1}} \right)&\geq \varlimsup_{m\to \infty}P\left(u_t^{\vartheta,\e_m}(\mathrm{U}_n^{(N,0)})\geq  e^{-3C_{\alpha,r,t}^2N^{\alpha+1}} \right)\\
&\geq \varliminf_{m\to \infty}P\left(u_t^{\vartheta,\e_m}(\mathrm{C}_{N})>  e^{-2C_{\alpha,r,t}^2N^{\alpha+1}} \right)\\
&\geq P\left(\mathscr{Z}_t^{\vartheta}(\mathrm{C}_{N})>  e^{-2C_{\alpha,r,t}^2N^{\alpha+1}} \right),
\end{align*}
where $\mathscr{Z}_t^{\vartheta}(\mathrm{C}_{N})$ is a weak limit of a subsequence of $\{u_t^{\vartheta,\e_m}(\mathrm{C}_{N})\}_{m\geq 1}$, which is $L^p$-tight for any $p>1$.

Furthermore, one can see that \begin{align*}
&E\left[\mathscr{Z}_t^\vartheta (\mathrm{C}_{N})\right]=\int_{B_{\frac{r}{20N}}(0)}\mathbf{P}_x\left(\omega[0,t]\subset   \mathrm{C}_{N}\right)\dd x\\
&E\left[\mathscr{Z}_t^\vartheta (\mathrm{C}_{N})^2\right]\leq E\left[\mathscr{Z}_t^\vartheta \left(B_{\frac{r}{20N}}(0),B_{\frac{r}{20N}}(0)\right)^2\right].
\end{align*}
The law of iterated logarithms implies that there exists a constant $c_{t,\alpha}>0$ such that for any $n\geq 1$ and $x\in B_{\frac{r}{20N}}(0)$, \begin{align*}
\frac{ \mathbf{P}_x\left(|\omega_s|\leq r_{n,r}(s) \text{ for }0\leq s\leq \frac{t}{2},\omega_{\frac{t}{2}}\in \dd y \right)}{\dd y}\geq c_{t,\alpha}>0 
\end{align*} 
uniformly \text{for }$|y|\leq \frac{1}{2}t^\frac{1}{\alpha}$. Then, considering the reverse of Brownian motion, we obtain that 
 \begin{align*}
\mathbf{P}_x\left(\omega[0,t]\subset  \mathrm{C}_{N}\right)\geq c_{t,\alpha}^2\frac{r^2}{(20N)^2}
\end{align*}
uniformly for $x\in B_{\frac{r}{20N}}(0)$.

We have from \eqref{eq:unun0} and the Paley-Zygmund inequality \begin{align*}
 P\left(\mathscr{Z}_t^\vartheta (\mathrm{C}_{N})>e^{-2C_{\alpha,r,t}^2N^{\alpha+1}}\right)\geq  P\left(\mathscr{Z}_t^\vartheta (\mathrm{C}_{N})>\frac{1}{2}E\left[\mathscr{Z}_t^\vartheta (\mathrm{C}_{N})\right]\right)\geq \frac{1}{4}\frac{E\left[\mathscr{Z}_t^\vartheta (\mathrm{C}_{N})\right]^2}{E\left[\mathscr{Z}_t^\vartheta (\mathrm{C}_{N})^2\right]}.
  \end{align*}
Therefore, we obtain from Proposition \ref{prop:diver} that there exists a constant $\widetilde{C}_{\vartheta,t,\alpha}>0$ such that \begin{align*}
\frac{E\left[\mathscr{Z}_t^\vartheta (\mathrm{C}_{N})\right]^2}{E\left[\mathscr{Z}_t^\vartheta (\mathrm{C}_{N})^2\right]}\geq \frac{\widetilde{C}_{\vartheta,t,\alpha}}{\left(\log \frac{10N}{r}\right)^2}.
\end{align*}

\end{proof}

\begin{proof}[Proof of Theorem \ref{thm:localposi} for $a\not=0$]
It is enough to retake  the family of regions  $\mathrm{U}_n^{(N,j)}$ by tilting their centers with the drift $\frac{as}{t}$. 
\end{proof}

\textbf{Acknowledgemments} This work was supported by JSPS KAKENHI Grant Number JP22K03351, JP23K22399. The author thanks Prof. Nikos Zygouras and Prof. Rongfeng Sun for their useful comments.

\bibliographystyle{alpha}


\end{document}